\newtheorem{theorem}{Theorem}
\theoremstyle{plain}
\newtheorem{proposition}{Proposition}
\numberwithin{equation}{section}
\begin{document}
\title[Short Title]{A note on the points with dense orbit under $\times 2$ and $\times 3$ maps
}
\author{Author One}
\address[A. One and A. Two]{Author OneTwo address line 1\\
Author OneTwo address line 2}
\email[A. One]{aone@aoneinst.edu}
\urladdr{http://www.authorone.oneuniv.edu}
\thanks{Thanks for Author One.}
\author{Author Two}
\curraddr[A. Two]{Author Two current address line 1\\
Author Two current address line 2}
\email[A.~Two]{atwo@atwoinst.edu}
\urladdr{http://www.authortwo.twouniv.edu}
\thanks{Thanks for Author Two.}
\author{Author Three}
\address[A. Three]{Author Three address line 1\\
Author Three address line 2}
\urladdr{http://www.authorthree.threeuniv.edu}
\date{December 26, 1997}
\subjclass[2000]{Primary 05C38, 15A15; Secondary 05A15, 15A18}
\keywords{Hausdorff dimension, dense of the orbit}
\thanks{This paper is in final form and no version of it will be submitted
for publication elsewhere.}

\begin{abstract}
It was conjectured by Furstenberg that for any $x\in [0,1]\backslash
Q$, $$ \dim_H \overline{\{2^nx ({\text{mod}}\ 1): n\ge 1\}}+ \dim_H
\overline{\{3^nx ({\text{mod}}\ 1): n\ge 1\}}\ge 1.
$$ When $x$ is a normal number, the above result holds trivially. In this note, we give explicit non-normal numbers for which the above dimensional formula holds.
\end{abstract}
\maketitle

\emph {\section{\textbf{Introduction}}}
\par Let $b\geq2$ be an integer and $T_b$ be the $b$-ary expansion given by $$
Tx= bx\ ({\text{mod}}\ 1).$$
 $x$ is a real number given by its $b-$ary expansion, that is,
 $$x=\lfloor x\rfloor+\sum_{k\geq1} \frac{a_k}{b^{k}}=\lfloor x\rfloor+0.a_1a_2\cdots$$

where the digit $a_1,a_2,\cdots$ are integers from $\{0,1,\ldots
b-1\}$ and an infinity of the $a_k$ are not equal to $b-1$. Set
$$A_b(d,N,x):=Card\{j: 1\leq j\leq N, a_j=d\}.$$
More generally, for a block $D_k=d_1\cdots d_k$ of $k$ digits from
$\{0,1,\cdots,b-1\}$, set
$$A_b(D_k,N,x):=Card\{j: 0\leq j\leq N-k, a_j+1=d_1,\cdots,a_{j+k}=d_k\}$$

A real number $x$ is normal to base $b$, if and only if, for every
$k\geq1$, every block of $k$ digits from $\{0,1,\ldots b-1\}$ occurs
in the $b-$ary
 expansion of $x$ with the same frequency $\frac{1}{b^{k}}$, that
 is, if and only if, $$\
 \lim_{n\rightarrow\infty}\frac{A_b(D_k,N,x)}{N}=\frac{1}{b^{k}}$$
 for every $k\geq$ and every block $D_k$ of $k$ digits from $\{0,1,\ldots
 b-1\}$.

 Given one integer $b\ge 2$, the dynamical properties of  the orbit
$T_b^n(x)$ were well studied in the literature and are clearly known
at least for almost all $x$. However, as far as two integers $b_1$
and $b_2$ are concerned (assume that they are multiplicatively
independent), the properties of the orbits $T_{b_1}^x(x)$ and
$T^n_{b_1}(x)$ with a common point $x$ is far from being well
understood. Very little is known on this topic. Intuitively, if the
expansion of $x$ under $T_{b_1}$ is simple, it cannot be too simple
under $T_{b_2}$ (for some evidence see \cite{Bu1}). And the
normality of a number is closely related to his expansion to
different bases(\cite{PH},\cite{BBS}), nowdays, the investigate of
normal number has been used  to resolve the problems in dynamical
system and ergodic.

Furstenberg conjectured that \cite{Fu}: for any $x\in
[0,1]\backslash Q$,
$$ \dim_H \overline{\{2^nx ({\text{mod}}\ 1): n\ge 1\}}+ \dim_H
\overline{\{3^nx ({\text{mod}}\ 1): n\ge 1\}}\ge 1.
$$
It is clear that this is true for almost all $x$, since almost all
$x$ are normal numbers. So, a natural question is to ask, besides
normal numbers, can one give explicit examples fulfilling the above
conjecture. In this short note, we proof there exist a set $E$ of
full dimension which make Furstenberg conjectured hold.

\section{\textbf{Some notation}}
\par
We use $C_1$ and $C_2$ to denote the collection of finite $2$-adic
and $3$-adic words respectively, i.e.
$$C_{1}=(\bigcup\limits_{n\geq 1}\{0,1\}^{n}), \ \
C_{2}=(\bigcup\limits_{n\geq 1}\{0,1,2\}^{n}).$$  Let
 $\mathcal{A}=(w_1, v_1, w_2, v_2, \cdots)$, where $w_i\in
C_1$ and $v_i\in C_2$, be an enumeration of the words in $C_1$ and
$C_2$ such that each word in $C_1$ and $C_2$ appears infinitely many
times in $\mathcal{A}$.

Let
$$T_2 x=2x~({\text{mod}}~ 1) ~~~\mbox{and}~~~ T_3 x=3x~({\text{mod}}~ 1),  x\in
[0,1]$$  be the $\times 2$ and $\times 3$ maps.

\begin{itemize}
\item $|w|$: the length of the word $w$;

\item $\lfloor\xi\rfloor$: the integer part of a real number $\xi$;

%\item $\ell_i$: the integer $\Big\lfloor |w_{i}|+(2+|v_{i}|)\log_{2}3 \Big\rfloor+2$;

\item $0^m$: a word of length $m$ composed by $0$.

\item $r_i$: the integer $\Big\lfloor |w_{i}|+(2+|v_{i}|)\log_{2}3 \Big\rfloor+2$;
\end{itemize}

For $u\in C_1$ and $v\in C_2$, the $2$-adic cylinder and $3$-adic
cylinder are defined as
\begin{align*}&[u]_{2}:=\{x\in [0,1]: {\text{the}}\
 2{\text{-adic expansion of}}\  x\  {\text{starts with}}\ u\},\\
& [v]_{3}:=\{x\in [0,1]: {\text{the}}\ 3{\text{-adic expansion of}}\
x\  {\text{starts with}} \ v\}.
\end{align*}

\section{\textbf{Main results}}
\begin{theorem} \label{3.1}
       There exist a Cantor set $E$ composed of nonnormal numbers such that for all $x\in E$,
       \begin{description}
         \item[1] $\overline{\{2^n x({\rm{mod}}\ 1): n\geq 1\}}=[0,1]$ and $\overline{\{3^n x({\rm{mod}}\ 1): n\geq
         1\}}=[0,1]$.
         \item[2] $\dim_{H}E=1$.
       \end{description}
       \end{theorem}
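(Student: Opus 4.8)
The plan is to build $E$ as a Cantor-type set of numbers whose $2$-adic expansions are prescribed on a sparse sequence of "control blocks" taken from the enumeration $\mathcal A$, while being left free on long intervening "free blocks." The control blocks force density of both orbits; the free blocks provide the full Hausdorff dimension. Concretely, I would construct $x = 0.a_1a_2\cdots$ (base $2$) inductively in stages. At stage $i$, having already filled positions up to some index $N_i$, I first append a long string $0^{m_i}$ of zeros, then append the $2$-adic word $w_i$; then I append another long zero string $0^{m_i'}$, then append the $2$-adic expansion (truncated to length $r_i$, say) of a number whose $3$-adic expansion begins with $0^{s_i} v_i$ — this is the delicate bookkeeping, since I am writing a base-$3$ constraint into a base-$2$ string. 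Finally I leave a free block of length $\ell_i$ in which every digit may be chosen arbitrarily from $\{0,1\}$, and set $N_{i+1}$ to be the new endpoint. The set $E$ is the collection of all $x$ obtained this way as the free-block digits range over all of $\{0,1\}$; it is closed (an intersection of finite unions of cylinders), perfect, and totally disconnected, hence a Cantor set, and each $x\in E$ is non-normal because the blocks $0^{m_i}$ force $A_2(0,N_i,x)/N_i$ to oscillate (choose $m_i$ growing fast enough that the zero-frequency does not converge, so no $x\in E$ is normal to base $2$).

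For part 1, I would argue density of the $\times 2$ orbit directly: given any target word $u\in C_1$ and any $\varepsilon>0$, pick an index $i$ with $w_i = u$ (possible since every word of $C_1$ appears infinitely often in $\mathcal A$). Then the shift of $x$ that lands exactly at the start of the copy of $w_i$ is some $2^n x \pmod 1$ whose $2$-adic expansion begins with $u$, so $2^n x \in [u]_2$; since $u$ and $\varepsilon$ were arbitrary and the cylinders $[u]_2$ form a basis, the orbit closure is all of $[0,1]$. The density of the $\times 3$ orbit is the same argument in base $3$: the point is that the long zero-block $0^{m_i'}$ we inserted before encoding $v_i$ makes the relevant chunk of $x$, read in base $3$, agree with $0^{s_i}v_i$ up to an error that the zeros absorb; shifting appropriately under $T_3$ (the exponent here is governed by $r_i = \lfloor |w_i| + (2+|v_i|)\log_2 3\rfloor + 2$, which is exactly the number of base-$2$ digits needed to pin down $|v_i|$-many base-$3$ digits) lands in $[v_i]_3$. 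Since every $v\in C_2$ occurs infinitely often, the $\times 3$ orbit is dense as well. The role of $r_i$ is precisely to reconcile the two radixes: $\log_2 3$ digits in base $2$ carry one digit's worth of base-$3$ information, and the extra additive constants give room for carries.

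For part 2, I would compute $\dim_H E \ge 1$ via the mass distribution principle (Frostman / Billingsley lemma). Put the natural measure $\mu$ on $E$: on each free block of length $\ell_i$, distribute mass uniformly over the $2^{\ell_i}$ choices, so a cylinder of $E$ of "depth through stage $i$" has $\mu$-measure $2^{-(\ell_1+\cdots+\ell_i)}$ while its diameter is $2^{-N_{i+1}} = 2^{-(N_i + m_i + |w_i| + m_i' + r_i + \ell_i)}$. The local dimension at a point of $E$ is then $\liminf$ of $\frac{\log \mu(\text{cylinder})}{\log \text{diameter}} = \frac{\ell_1+\cdots+\ell_i}{N_i + m_i + |w_i| + m_i' + r_i + \ell_i}$, and this tends to $1$ provided the free-block lengths $\ell_i$ are chosen to grow so fast that $\ell_i$ dominates the cumulative cost $N_i + m_i + |w_i| + m_i' + r_i$ of all the control material inserted up to that point — e.g. choose $\ell_i$ recursively larger than $i\cdot(N_i + m_i + m_i' + r_i)$. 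Then $\dim_H E \ge 1$ by the mass distribution principle, and $\dim_H E \le 1$ trivially since $E\subseteq[0,1]$, giving $\dim_H E = 1$.

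The main obstacle — and the only genuinely non-routine part — is the cross-base encoding in the $\times 3$ step: one must verify that inserting the finite base-$2$ string that approximates "$0^{s_i}v_i$ in base $3$," sandwiched between sufficiently long zero-blocks, really does guarantee $T_3^{\,n} x \in [v_i]_3$ for an explicit $n$, controlling the carry propagation that arises when a base-$3$ pattern is embedded in a base-$2$ expansion. This is exactly what the definition of $r_i$ is engineered to handle, and making that estimate precise (with the zero-buffers $0^{m_i}, 0^{m_i'}$ absorbing the truncation error in both directions) is where the real work lies; once it is in place, the non-normality and the dimension computation are straightforward given a fast-enough choice of the parameters $m_i, m_i', \ell_i$.
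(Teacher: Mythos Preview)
Your overall architecture---fixed ``control'' blocks to force orbit density, interspersed with long ``free'' blocks to supply Hausdorff dimension, then the mass distribution/Billingsley lemma---is the same as the paper's. But two of the three ingredients are not set up correctly.

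\smallskip

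\textbf{The cross--base step.} You propose to append, after a long run $0^{m_i'}$ of base--$2$ zeros, the base--$2$ truncation of a number whose base--$3$ expansion begins $0^{s_i}v_i$, and claim the zeros ``absorb'' the error. This does not do what you want. A run of zeros in base $2$ imposes no readable pattern on the base--$3$ expansion of $x$ itself; at best it tells you that $T_2^{\,N+m_i'}x$ lies in a certain $3$--adic cylinder, but $T_3$ and $T_2$ do not commute, so this says nothing about $T_3^{\,n}x$. The paper avoids this entirely by \emph{nesting cylinders}: given the current $2$--adic cylinder $[\eta_{k-1},i_1^{(k)}\cdots i_{n_k}^{(k)},w_k]_2$, it picks a $3$--adic cylinder $[\epsilon_k]_3$ of the appropriate order lying \emph{inside} it, then passes to the sub--$3$--adic cylinder $[\epsilon_k,v_k]_3$, and finally picks a $2$--adic cylinder $[\eta_k]_2$ inside that. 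The chain
\[
[\eta_k]_2\subset[\epsilon_k,v_k]_3\subset[\eta_{k-1},\,i_1^{(k)}\cdots i_{n_k}^{(k)},w_k]_2
\]
guarantees $T_3^{|\epsilon_k|}x\in[v_k]_3$ for every $x$ in the construction, with no carry analysis at all.

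\smallskip

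\textbf{Nonnormality versus dimension.} There is a genuine incompatibility in your parameter choices. To make \emph{every} $x\in E$ nonnormal via the blocks $0^{m_i}$, those blocks must occupy a proportion of the expansion bounded away from zero along a subsequence of scales: if the free blocks have asymptotic density $1$, then filling them with the digits of a normal sequence produces a normal point of $E$. But then, at the scale $t=N_i+m_i$ (just after a zero block and before the next free block), $E$ is covered by only $2^{\ell_1+\cdots+\ell_{i-1}}$ dyadic intervals of length $2^{-t}$, and the ratio $(\ell_1+\cdots+\ell_{i-1})/(N_i+m_i)$ is bounded away from $1$. Your Billingsley computation only evaluates the local dimension along the subsequence $t=N_{i+1}$; the true $\liminf$ over all $t$ is strictly smaller, and in fact $\dim_H E<1$. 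The paper handles this by a different nonnormality device: in the free block of length $n_k=\ell_k m$ it fixes two out of every $m$ digits to be $1$ (this is the set $\widetilde{C_1}^{\,n_k}$), so the word $0^{2m}$ has frequency $0$ in every $x\in E_m$. This costs only a $2/m$ fraction of the free digits, yielding $\dim_H E_m\ge 1-2/m$, and the paper then takes $E=\bigcup_{m\ge1}E_m$ to reach dimension $1$.
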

The construction of the Cantor set $E$ is divided into three steps.
Recall that $\mathcal{A}=\{w_1,v_1,w_2,v_2,\cdots\}$ be an
enumeration of the elements in $w\in C_1$ and $v\in C_2$ such that
each element in $C_1\cup C_2$ appears infinitely many times.

Fix an integer $m\ge 1$. Choose a sequence of integers
$\{\ell_k\}_{k\ge 1}, \{n_k\}_{k\ge 1}$ such that
\begin{equation}\label{f4} n_k=\ell_{k}\cdot m,\ \lim_{k\to
\infty}\frac{r_1+\cdots+r_k}{ n_{k}}=0.
\end{equation}
 \par For each $k\ge 1$, write
\begin{equation}\label{f2}\widetilde{C_1}^{n_k}:=\Big\{(i_1,\cdots,i_{n_k})\in
C_1: i_{\ell m+1}=i_{\ell m+m}=1, 0\le
\ell<\ell_k\Big\}.\end{equation}

 $\mathbf{Step\ 1}$: the first level of the Cantor set.

 For each
$(i_{1}^{(1)} \cdots i_{n_{1}}^{(1)})\in \widetilde{C_1}^{n_1}$,
consider the $2$-adic cylinder $$ [i_{1}^{(1)} \cdots
i_{n_{1}}^{(1)}, w_1]_2.
$$ Let $r_1$ be the integer such that $$
3\left(\frac{1}{3}\right)^{r_1}\le
\left(\frac{1}{2}\right)^{n_1+|w_1|}<
3\left(\frac{1}{3}\right)^{r_1-1}.
$$ Then the $2$-adic cylinder $[i_{1}^{(1)} \cdots i_{n_{1}}^{(1)}, w_1]_2$ will contain at least one $3$-adic cylinder of order $r_1$. Denote by $[\epsilon_1]_3$ such a $3$-adic cylinder (if there are  many, just choose one).

It should be mentioned that $r_1$ does not depend  on $(i_{1}^{(1)}
\cdots i_{n_{1}}^{(1)})$ but $\epsilon_1$ does. This dependence will
not play a role in the following argument, thus will not be
explicitly addressed.

Now consider the $3$-adic subcylinder $[\epsilon_1, v_1]_3$
contained in $[\epsilon_1]_3$. Similarly, let $t_1$ be the integer
such that
$$
2\left(\frac{1}{2}\right)^{t_1}\le
\left(\frac{1}{3}\right)^{r_1+|v_1|}<
2\left(\frac{1}{2}\right)^{t_1-1}.
$$ Then choose a $2$-adic cylinder $[\eta_1]_2$ of order $t_1$ contained in $[\epsilon_1, v_1]_3$, so contained in the $2$-adic cylinder $[i_{1}^{(1)} \cdots i_{n_{1}}^{(1)}, w_1]_2$. Thus we can write this $2$-adic cylinder $[\eta_1]_2$ as $$
[\eta_1]_2=[i_{1}^{(1)} \cdots i_{n_{1}}^{(1)}, w_1, \tilde{v_1}]_2,
$$ to emphasize its dependence on $v_1$.
Now we have the following inclusions: \begin{equation}\label{f1}
[i_{1}^{(1)} \cdots i_{n_{1}}^{(1)}, w_1, \tilde{v_1}]_2\subset
[\epsilon_1,v_1]_3\subset [i_{1}^{(1)} \cdots i_{n_{1}}^{(1)},
w_1]_2.
\end{equation}  A simple calculation can give us an estimation on the integer $t_1$:
\begin{equation*}
n_1+|w_1|+(|v_1|+1)\frac{\log 3}{\log 2}+1\le t_1<
n_1+|w_1|+(|v_1|+2)\frac{\log 3}{\log 2}+2.
\end{equation*}

The first level of $E$ is defined as $$ F_1=\bigcup_{(i_{1}^{(1)}
\cdots i_{n_{1}}^{(1)})\in \widetilde{C_1}^{n_1}} [i_{1}^{(1)}
\cdots i_{n_{1}}^{(1)}, w_1, \tilde{v_1}]_2,
$$ which is a collection of $2$-adic cylinders of order $t_1$.

 $\mathbf{Step\ k}$: the $k$-level of the Cantor set.

 This is an inductive step. Assume that the $(k-1)$th level $F_{k-1}$ has been constructed, which is a collection of $2$-adic cylinders $[\eta_{k-1}]$ of order $t_{k-1}$. Now we construct the $k$th level $F_k$.

 Fix an element $[\eta_{k-1}]_{2}\in F_{k-1}$. For each   $(i_1^{(k)}\cdots i_{n_k}^{(k)})\in \widetilde{C_1}^{n_k}$, consider the $2$-adic cylinder
$$[\eta_{k-1}, i_1^{(k)}\cdots i_{n_k}^{(k)}, w_k]_{2}.$$
Similar as in Step 1, let $r_k$ be the integer such that $$
3\left(\frac{1}{3}\right)^{r_k}\le
\left(\frac{1}{2}\right)^{t_{k-1}+|w_k|}<
3\left(\frac{1}{3}\right)^{r_k-1}.
$$ Then there is a $3$-adic cylinder, denoted by $[\epsilon_k]_3$, of order $r_k$ contained in the 2-adic cylinder $[\eta_{k-1}, i_1^{(k)}\cdots i_{n_k}^{(k)}, w_k]_{2}$ (if there are many, just choose one).

It should also be mentioned that $r_k$ does not depend on on
$(i_{1}^{(j)} \cdots i_{n_{j}}^{(j)})$ for $1\le j\le k$ but
$\epsilon_k$ does. This dependence will not play a role in the
following argument, thus will not be explicitly addressed.

Now consider the $3$-adic subcylinder $[\epsilon_k, v_k]_3$
contained in $[\epsilon_k]_3$. Let $t_k$ be the integer such that
$$
2\left(\frac{1}{2}\right)^{t_k}\le
\left(\frac{1}{3}\right)^{r_k+|v_k|}<
2\left(\frac{1}{2}\right)^{t_k-1}.
$$ Then choose a $2$-adic cylinder $[\eta_2]_2$ of order $t_k$ contained in $[\epsilon_{k}, v_k]_3$, so contained in the $2$-adic cylinder $[\eta_{k-1}, i_{1}^{(k)} \cdots i_{n_{k}}^{(k)}, w_k]_2$. Thus we can write this $2$-adic cylinder $[\eta_k]_2$ as $$
[\eta_k]_2=[\eta_{k-1}, i_{1}^{(k)} \cdots i_{n_{k}}^{(k)}, w_k,
\tilde{v_k}]_2,
$$ to emphasize its dependence on $v_k$.
Now we have the following inclusions: \begin{equation}\label{ff1}
[\eta_{k-1}, i_{1}^{(k)} \cdots i_{n_{k}}^{(k)}, w_k,
\tilde{v_k}]_2\subset [\epsilon_k,v_k]_3\subset [\eta_{k-1},
i_{1}^{(k)} \cdots i_{n_{k}}^{(k)}, w_k]_2.
\end{equation} A simple calculation can give us an estimation on the integer $t_k$ (replace the role of $n_1$ by $t_{k-1}$, $|w_1|$ by $|w_k|$, $|v_1|$ by $|v_k|$):
\begin{equation}\label{ff2}
t_{k-1}+|w_k|+n_{k}+(|v_k|+1)\frac{\log 3}{\log 2}+1\le t_k<
t_{k-1}+|w_k|+n_{k}+(|v_k|+2)\frac{\log 3}{\log 2}+2.
\end{equation}

Let $p_i=|w_k, \tilde{v}_k|=t_k-t_{k-1}-n_k\leq r_i$. Thus it
follows from (\ref{ff2}) and (\ref{f4}) that
\begin{equation}\label{f3}  \
\lim_{k\to\infty} \frac{|w_1,\tilde{v}_1|+\cdots
+|w_k,\tilde{v}_k|}{n_1+\cdots +n_k}\leq \
\lim_{k\to\infty}\frac{r_1+\cdots+r_k}{n_1\cdots +n_{k}}=0.
\end{equation}

The $k$th level of $E$ is defined as $$
F_k=\bigcup_{[\eta_{k-1}]_2\in F_{k-1}}\bigcup_{(i_{1}^{(k)} \cdots
i_{n_{k}}^{(k)})\in \widetilde{C_1}^{n_k}} [\eta_{k-1}, i_{1}^{(k)}
\cdots i_{n_{k}}^{(k)}, w_k, \tilde{v_k}]_2,
$$ which is a collection of $2$-adic cylinders of order $t_k$.

Finally, the desired Cantor set $E$ is defined as $$ E=\bigcup_{m\ge
1}E_m, \
  E_m=\bigcap_{k\ge 1}\bigcup_{[\eta_k]_2\in F_k} [\eta_k]_2.
$$

The following three propositions correspond to the three items in
Theorem \ref{3.1}.

\begin{proposition}
Every point in $E_m$ is nonnormal.
\end{proposition}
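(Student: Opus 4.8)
The plan is to show that a single digit block occurs in the base-$2$ expansion of every $x\in E_m$ with a frequency different from the one demanded by normality to base $2$; the natural candidate is the block $0^m$ of $m$ consecutive zeros. First I would record the shape of the expansion. By the nested construction of $E_m$, the base-$2$ expansion of any $x\in E_m$ is the concatenation
$$
x=0.\,(i^{(1)})\,w_1\,\tilde{v_1}\,(i^{(2)})\,w_2\,\tilde{v_2}\,\cdots,\qquad (i^{(k)}):=i_1^{(k)}\cdots i_{n_k}^{(k)}\in\widetilde{C_1}^{n_k},
$$
where the block $(i^{(k)})$ ends at digit position $t_{k-1}+n_k$ and the inserted word $w_k\tilde{v_k}$ has length $p_k=t_k-t_{k-1}-n_k\le r_k$.

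The key combinatorial observation is that $0^m$ is never a factor of one of the blocks $(i^{(k)})$. Indeed, by (\ref{f2}) the digits of $(i^{(k)})$ at the positions $\ell m+1$ and $\ell m+m$ equal $1$ for every $0\le\ell<\ell_k$, and these positions are close enough together that every window of $m$ consecutive indices inside $\{1,\dots,n_k\}$ contains at least one of them; hence no length-$m$ window lying entirely inside $(i^{(k)})$ can consist only of zeros. (When $m=1$ this is simply the degenerate case in which $(i^{(k)})$ is the all-ones word, and the same conclusion is immediate.) It follows that every occurrence of $0^m$ among the first $t_k$ digits of $x$ must be a window meeting one of the $k$ inserted segments $w_1\tilde{v_1},\dots,w_k\tilde{v_k}$, possibly straddling a block boundary; since at most $p_j+m$ windows meet the $j$-th segment, we obtain
$$
A_2(0^m,t_k,x)\le\sum_{j=1}^{k}(p_j+m)=\Big(\sum_{j=1}^{k}p_j\Big)+mk.
$$

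Finally I would divide by $t_k\ge n_1+\cdots+n_k$ and let $k\to\infty$. The term $\big(\sum_{j\le k}p_j\big)/(n_1+\cdots+n_k)$ tends to $0$ by (\ref{f3}), while $mk/(n_1+\cdots+n_k)\le mk/n_k$ tends to $0$ because $r_i\ge1$ gives $k\le r_1+\cdots+r_k$ and (\ref{f4}) forces $(r_1+\cdots+r_k)/n_k\to0$. Hence $A_2(0^m,t_k,x)/t_k\to 0$, whereas normality to base $2$ would force $A_2(0^m,N,x)/N\to 2^{-m}>0$; this contradiction shows $x$ is not normal to base $2$, and therefore $x$ is nonnormal. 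As $x\in E_m$ was arbitrary the proposition follows. The only step requiring any care is the spacing argument in the middle paragraph together with honest bookkeeping of the boundary and straddling windows in the count; once (\ref{f3}) and (\ref{f4}) are available, the limit computation is routine.
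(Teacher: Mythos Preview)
Your proof is correct and follows essentially the same route as the paper's: write out the base-$2$ expansion of $x\in E_m$, observe that the block $0^m$ never occurs inside any $(i^{(k)})\in\widetilde{C_1}^{n_k}$, and conclude via (\ref{f3}) and (\ref{f4}) that its frequency is $0$ rather than $2^{-m}$. Your bookkeeping is slightly more cautious than needed---since the first and last digits of each $(i^{(k)})$ are forced to be $1$, no $0^m$ can actually straddle a block boundary, so the paper's cruder bound $p_1+\cdots+p_{k-1}$ already suffices and your extra $+m$ per segment is harmless but unnecessary.
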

\begin{proof}
By the construction of $E_m$, for each $x\in E_m$, we express $x$ by
its 2-adic expansion as follows: $$ x=[i_{1}^{(1)} \cdots
i_{n_{1}}^{(1)}, w_1, \tilde{v}_1,\cdots, i_{1}^{(k)} \cdots
i_{n_{k}}^{(k)}, w_k, \tilde{v_k}, \cdots]_2,
$$ where $(i_{1}^{(k)} \cdots i_{n_{k}}^{(k)})\in \widetilde{C_1}^{n_k}$.

We count the frequency of the word $0^{2m}$ occurring in the 2-adic
expansion of $x$. Recall the definition of $\widetilde{C_1}^{n_k}$.
Among the first $t_{k-1}+n_k$ terms in the 2-adic expansion of $x$,
the number of the occurrence of $0^{m}$ will be less than
$p_1+\ldots+p_{k-1}$, Thus the frequency of the word $0^m$ occurring
in the 2-adic expansion of $x$ is less than $$ \liminf_{k\to
\infty}\frac{p_1+\ldots p_{k-1}}{n_k}=0.
$$ This shows that $x$ is nonnormal.

This shows that $x$ is nonnormal.
\end{proof}

\begin{proposition} For any $x\in E$, $$
\overline{\{2^nx ({\rm{mod}}\ 1): n\ge 1\}}=[0,1], \
\overline{\{3^nx ({\rm{mod}}\ 1): n\ge 1\}}=[0,1].
$$\end{proposition}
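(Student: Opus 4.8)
The plan is to exploit the fact that, by construction, the 2-adic expansion of any $x\in E$ contains, for every $k$, the block $w_k$ as a ``visible'' prefix of a shifted orbit point, and similarly its 3-adic expansion contains every $v_k$; since $\mathcal{A}$ lists every finite $2$-adic word and every finite $3$-adic word infinitely often, this will force both orbit closures to meet every cylinder, hence to be all of $[0,1]$. Concretely, fix $x\in E$, so $x\in E_m$ for some $m$, and write $x$ in its 2-adic expansion as in the previous proposition, $$ x=[i_{1}^{(1)} \cdots i_{n_{1}}^{(1)}, w_1, \tilde{v}_1,\cdots, i_{1}^{(k)} \cdots i_{n_{k}}^{(k)}, w_k, \tilde{v_k}, \cdots]_2. $$ For the $\times 2$ statement: given any target point $y\in[0,1]$ and any $\varepsilon>0$, pick a $2$-adic word $u$ with $[u]_2\subset (y-\varepsilon,y+\varepsilon)$ (possible since $2$-adic cylinders shrink to points and cover $[0,1]$). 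By the choice of $\mathcal{A}$ there are infinitely many indices $k$ with $w_k=u$. For such a $k$, the digit string of $x$ from position $t_{k-1}+n_k+1$ onward begins with $w_k=u$, so if $N_k:=t_{k-1}+n_k$ then $T_2^{N_k}x\in[u]_2\subset(y-\varepsilon,y+\varepsilon)$. Letting $\varepsilon\to 0$ and using that $y$ was arbitrary shows $\overline{\{2^nx:n\ge1\}}=[0,1]$.

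The $\times 3$ statement is handled symmetrically, but here one must pass through the $3$-adic description of $x$. The inclusions (\ref{f1}) and (\ref{ff1}) show that the $2$-adic cylinder $[\eta_k]_2=[\eta_{k-1},i_1^{(k)}\cdots i_{n_k}^{(k)},w_k,\tilde v_k]_2$ sits inside the $3$-adic cylinder $[\epsilon_k,v_k]_3$, which in turn sits inside $[\epsilon_k]_3$, a $3$-adic cylinder of order $r_k$. Hence $x\in[\epsilon_k,v_k]_3$ for every $k$, i.e. the $3$-adic expansion of $x$ has the block $v_k$ starting at position $r_k+1$. Therefore $T_3^{r_k}x\in[v_k]_3$. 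Now given $y\in[0,1]$ and $\varepsilon>0$, choose a $3$-adic word $v$ with $[v]_3\subset(y-\varepsilon,y+\varepsilon)$; since $v=v_k$ for infinitely many $k$, we get $T_3^{r_k}x\in[v]_3\subset(y-\varepsilon,y+\varepsilon)$ for infinitely many $k$. As before this yields $\overline{\{3^nx:n\ge1\}}=[0,1]$.

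The only genuine point needing care — and the place I would spend the most words — is verifying that the blocks $w_k$ and $v_k$ really do appear at the claimed positions of the respective expansions of $x$, i.e. that the nested-cylinder construction does not accidentally truncate or overwrite them. For the $2$-adic side this is immediate from the way $F_k$ is built (the word $w_k$ is literally concatenated into the $2$-adic address). For the $3$-adic side one must note that $[\epsilon_k]_3$ has a well-defined $3$-adic address of length exactly $r_k$, that appending $v_k$ gives the address of $[\epsilon_k,v_k]_3$, and that $x$ lies in every $[\epsilon_k,v_k]_3$ because the cylinders $[\eta_k]_2$ are nested (each $[\eta_k]_2\subset[\eta_{k-1}]_2$) and $[\eta_k]_2\subset[\epsilon_k,v_k]_3$; so $x=\bigcap_k[\eta_k]_2\subset[\epsilon_k,v_k]_3$ for each fixed $k$. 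Once this bookkeeping is in place, the density argument is routine. I would also remark explicitly that the conclusion is uniform over $m$, so it holds for all $x\in E=\bigcup_{m\ge1}E_m$.
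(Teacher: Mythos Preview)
Your proof is correct and follows essentially the same approach as the paper: both use the nested inclusions (\ref{ff1}) to deduce that $T_2^{t_{k-1}+n_k}x\in[w_k]_2$ and $T_3^{r_k}x\in[v_k]_3$ for every $k$, and then invoke that each finite word in $C_1$ and $C_2$ occurs as some $w_k$ (resp.\ $v_k$) infinitely often in $\mathcal{A}$. Your write-up is simply more explicit about the density reformulation and about why $x\in[\epsilon_k,v_k]_3$ for each $k$, which the paper leaves implicit.
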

\begin{proof}
Fix an $x\in E$. It is sufficient to show that for any $2$-adic
finite word $w\in C_1$ and $v\in C_2$, they appear infinitely often
in the $2$-adic expansion and the $3$-adic expansion of $x$
respectively.

Recall the definition of $\mathcal{A}$. It suffices to show that for
each $k\ge 1$, $w_k$ and $v_k$ appear in the 2-adic expansion  and
the 3-adic expansion of $x$ respectively. This follows from
(\ref{ff1}) by noticing that $$ T_3^{r_k}x\in [v_k]_3, \
T_2^{t_{k-1}+n_k}(x)\in [w_k]_2.
$$
\end{proof}

\begin{proposition}
For each $m\ge 2$, $\dim_HE_m\ge 1-2/m$.
\end{proposition}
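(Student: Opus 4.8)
The plan is to equip $E_m$ with its natural probability measure $\mu$ — the one obtained by splitting mass evenly at each stage of the construction — and to bound from below the lower local dimension of $\mu$, after which the mass distribution principle (Billingsley's lemma) gives $\dim_H E_m\ge 1-\tfrac{2}{m}$. For $m=2$ the statement is vacuous, so one may assume $m\ge 3$.

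First I would define $\mu$. Recall that $F_k$ is obtained from $F_{k-1}$ by replacing each $2$-adic cylinder $[\eta_{k-1}]_2\in F_{k-1}$ by its $|\widetilde{C_1}^{n_k}|=2^{(m-2)\ell_k}$ descendants $[\eta_{k-1},i_1^{(k)}\cdots i_{n_k}^{(k)},w_k,\tilde v_k]_2$, all of order $t_k$. Let $\mu$ distribute the mass of each $[\eta_{k-1}]_2$ equally among these descendants; this defines a Borel probability measure carried by $E_m$ for which every cylinder of $F_k$ has $\mu$-measure
$$\mu_k:=2^{-(m-2)(\ell_1+\cdots+\ell_k)}=2^{-\frac{m-2}{m}(n_1+\cdots+n_k)}.$$
Since a cylinder of $F_k$ is a dyadic interval of length $2^{-t_k}$, and $p_j=t_j-t_{j-1}-n_j$ gives $t_k=(n_1+\cdots+n_k)+(p_1+\cdots+p_k)$, relation (\ref{f3}) already forces $\log\mu_k/\log 2^{-t_k}\to\frac{m-2}{m}=1-\frac{2}{m}$; thus the exponent is correct along the scales $r=2^{-t_k}$, and the real task is to promote this to all small $r$.

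So I would fix $x\in E_m$ and $r$ with $2^{-N-1}\le r<2^{-N}$, and choose $k$ with $t_{k-1}\le N<t_k$. Because the $t_j$ depend only on the fixed lengths $|w_j|,|v_j|$, the generation-$N$ partition into dyadic intervals of length $2^{-N}$ is the same throughout the construction, and one checks that every generation-$N$ cylinder meeting $E_m$ has the same $\mu$-measure $\mu_{k-1}2^{-f(N)}$, where $f(N)$ is the number of \emph{free} binary coordinates used so far in the level-$k$ block — those lying outside the forced positions $\ell m+1,(\ell+1)m$ ($0\le\ell<\ell_k$) of $\widetilde{C_1}^{n_k}$ and outside $w_k,\tilde v_k$. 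Each length-$m$ sub-block of $\widetilde{C_1}^{n_k}$ has exactly $m-2$ free coordinates, so $f(N)\ge(m-2)\lfloor(N-t_{k-1})/m\rfloor$ for $t_{k-1}\le N\le t_{k-1}+n_k$, and $f(N)=(m-2)\ell_k$ for $t_{k-1}+n_k\le N\le t_k$. As $B(x,r)$ is an interval of length $<2^{-N+1}$ it meets at most three generation-$N$ cylinders, so $\mu(B(x,r))\le 3\mu_{k-1}2^{-f(N)}$, and since $-\log r\le(N+1)\log 2$,
$$\frac{\log\mu(B(x,r))}{\log r}\ \ge\ \frac{(m-2)(\ell_1+\cdots+\ell_{k-1})+f(N)-\log_2 3}{N+1}.$$
Writing $N-t_{k-1}=\ell m+s$ ($0\le\ell<\ell_k$, $0\le s<m$), using $t_{k-1}=m(\ell_1+\cdots+\ell_{k-1})+(p_1+\cdots+p_{k-1})$, and setting $L=\ell_1+\cdots+\ell_{k-1}+\ell$, the right side is at least $\dfrac{(m-2)L-\log_2 3}{mL+(p_1+\cdots+p_{k-1})+s+1}$; as $r\to0$ one has $L\to\infty$, and since $(p_1+\cdots+p_{k-1})\le mL$ while (\ref{f4}) and (\ref{f3}) make $(p_1+\cdots+p_{k-1})/(n_1+\cdots+n_{k-1})\to0$, this quotient tends to $\frac{m-2}{m}=1-\frac{2}{m}$. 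The range $t_{k-1}+n_k\le N\le t_k$ is handled identically with $f(N)=(m-2)\ell_k$ and $N+1\le t_k+1=m(\ell_1+\cdots+\ell_k)+(p_1+\cdots+p_k)+1$. Hence $\liminf_{r\to0}\log\mu(B(x,r))/\log r\ge1-\frac{2}{m}$ for every $x\in E_m$, and the mass distribution principle yields $\dim_H E_m\ge1-\frac{2}{m}$.

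I expect the only real obstacle to be the intermediate-scale bookkeeping in the last paragraph: one has to verify that the lower bound for $\log\mu(B(x,r))/\log r$ stays above $1-\tfrac{2}{m}-o(1)$ uniformly while $r$ sweeps the plateau $2^{-t_{k-1}}\ge r\ge2^{-t_k}$. The two things that could spoil this are the segment $w_k,\tilde v_k$, which lengthens the cylinders without adding $\mu$-mass, and the forced coordinates opening each sub-block; the first is controlled because $p_k\le r_k$ and $(p_1+\cdots+p_k)=o(n_1+\cdots+n_k)$ by (\ref{f4}) and (\ref{f3}), while the second contributes exactly the density $(m-2)/m$ of free digits, which is what pins the value at $1-\tfrac{2}{m}$. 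Everything else is routine.
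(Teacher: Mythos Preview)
Your argument is correct and follows essentially the same strategy as the paper: put the natural equidistributed measure on $E_m$, compute its lower local dimension by counting how many dyadic coordinates are free versus forced, and invoke Billingsley's lemma/mass distribution principle. The paper does this by explicitly enumerating the number $b_t$ of order-$t$ dyadic cylinders meeting $E_m$ (your $\mu_{k-1}2^{-f(N)}$ is exactly $b_N^{-1}$) and then applying the dyadic form of Billingsley's lemma to $\mu(I_t)=b_t^{-1}$, whereas you route through Euclidean balls $B(x,r)$ at the cost of a harmless factor~$3$; the intermediate-scale bookkeeping and the use of \eqref{f3}/\eqref{f4} to kill the $p_j$'s are the same in both.
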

\begin{proof}
Recall the definition of $\widetilde{C}_1^{n_k}$ (\ref{f2}). Write
the points in $E_m$ by its 2-adic expansion: $$ \Big\{[i_{1}^{(1)}
\cdots i_{n_{1}}^{(1)}, w_1, \tilde{v}_1,\cdots, i_{1}^{(k)} \cdots
i_{n_{k}}^{(k)}, w_k, \tilde{v_k}, \cdots]_2,\ \ (i_{1}^{(k)} \cdots
i_{n_{k}}^{(k)})\in \widetilde{C_1}^{n_k}, k\ge 1 \Big\}.
$$

Now we will distribute a measure uniformly distributed on the
cylinders with nonempty intersection with $E_m$. Thus we count the
number of $t$-th cylinders which have nonempty intersection with
$E_m$ for each $t\ge 1$. We denote this number by $b_t$.

Recall the construction of the $k$th level $F_k$ of the Cantor set
$E_m$. We know that among the digit sequence of a point in $E_m$, at
the positions $$ \Big\{t_k+\ell m+1, t_k+(\ell+1)m,
t_k+n_{k+1}+1,\cdots, t_k+n_{k+1}+p_{k+1}, 0\le \ell<\ell_{k+1},
k\ge 0\Big\},
$$ the digits are fixed, while at other positions, the digits can be chosen arbitrarily in $\{0,1\}$. Thus the numbers $b_t$ can be given as follows.

(i). When $t\le t_1=n_1+p_1=\ell_1 m+p_1$, \begin{eqnarray*}
b_t=\left\{
      \begin{array}{ll}
        2^{t-2\ell}, & \hbox{when $t=\ell m$ for $0\le \ell <\ell_1$;}\\
2^{t-2\ell-1}, & \hbox{when $\ell m< t< (\ell+1)m$ for $0\le \ell<\ell_1$;} \\
        2^{\ell_1 m-2\ell_1}, & \hbox{when $\ell_1 m\le t\le n_1+p_1$.}
      \end{array}
    \right.
\end{eqnarray*}

(ii). When $t_{k}<t\le t_{k+1}=t_k+n_{k+1}+p_{k+1}=t_k+\ell_{k+1}
m+p_{k+1}$, \begin{eqnarray*} b_t=\left\{
      \begin{array}{ll}
\prod_{j=1}^{k}2^{\ell_j m-2\ell_j}\cdot 2^{t-t_k-2\ell}, & \hbox{when $t=t_k+ \ell m$ for $0\le \ell<\ell_{k+1}$;}\\
        \prod_{j=1}^{k}2^{\ell_j m-2\ell_j}\cdot 2^{t-t_k-2\ell-1}, & \hbox{when $\ell m< t< (\ell+1)m$ for $0\le \ell<\ell_{k+1}$;} \\
        \prod_{j=1}^{k+1}2^{\ell_j m-2\ell_j}, & \hbox{when $t_k+n_{k+1}\le t\le t_k+n_{k+1}+p_{k+1}$.}
      \end{array}
    \right.
\end{eqnarray*}

Thus for each 2-adic cylinder $I_t$ with nonempty intersection with
$E_m$, its measure can be given explicitly as $$ \mu(I_t)=b_t^{-1}.
$$

As a consequence, together with (\ref{f3}), we have
 \begin{equation}
\liminf_{t\to \infty}\frac{\log\mu(I_{t})}{\log|I_t|} =\liminf_{t\to
\infty}\frac{\log b_t}{t\log 2}\ge 1-\frac{2}{m}.
\end{equation}

\par So, using Billingsley's lemma \cite{BP}: Let $A\subset[0,1]$ be Borel and let $\mu$ be a finite Borel measure on $[0,1]$. Suppose $\mu(A)\geq0$.
If$$\alpha\leq\liminf_{n\to
\infty}\frac{log\mu(I_n(x))}{log|I_n(x)|}\leq\beta$$ for all $x\in
A$, then$ \alpha\leq dim(A)\leq \beta$. we conclude that
$$\dim_HE_m\geq1-\frac{2}{m}$$
and then $\dim_HE=1$.
\end{proof}

\end{document}